\renewcommand {\a}{ \alpha }
\renewcommand{\b}{\beta}
\newcommand{\g}{\gamma}
\newcommand{\vark}{\varkappa}
\newcommand{\s}{\sigma}
\renewcommand{\l}{\lambda}
\renewcommand{\t}{\theta}
\newcommand{\R}{ \mathbb R}
\newcommand {\bx}{\mathbf x}
\newcommand {\bxi}{\boldsymbol\xi}
\newcommand{\CK}{\mathcal K}
\newcommand{\plainC}[1]{\textup{{\textsf{C}}}^{#1}}
\newcommand{\plainH}[1]{\textup{{\textsf{H}}}^{#1}}
\newcommand{\plainL}[1]{\textup{{\textsf{L}}}^{#1}}
\newcommand{\1}
{{\,\vrule depth3pt height9pt}{\vrule depth3pt height9pt}
{\vrule depth3pt height9pt}{\vrule depth3pt height9pt}\,}
\DeclareMathOperator {\re} {{Re}}
\DeclareMathOperator{\op}{{Op}}
\newtheorem{thm}{Theorem}[section]
\newtheorem{cor}[thm]{Corollary}
\newtheorem{lem}[thm]{Lemma}
\newtheorem{prop}[thm]{Proposition}
\newtheorem{cond}[thm]{Condition}
\theoremstyle{definition}
\newtheorem{rem}[thm]{Remark}
\numberwithin{equation}{section}
\newcommand{\bee}{\begin{equation}}
\newcommand{\ene}{\end{equation}}
\newcommand{\bees}{\begin{equation*}}
\newcommand{\enes}{\end{equation*}}
\newcommand{\bes}{\begin{split}}
\newcommand{\ens}{\end{split}}
\newcommand{\bet}{\begin{thm}}
\newcommand{\ent}{\end{thm}}
\newcommand{\bel}{\begin{lem}}
\newcommand{\enl}{\end{lem}}
\newcommand{\bec}{\begin{cor}}
\newcommand{\enc}{\end{cor}}
\newcommand{\bep}{\begin{proof}}
\newcommand{\enp}{\end{proof}}
\newcommand{\ber}{\begin{rem}}
\newcommand{\enr}{\end{rem}}
\newcommand{\CF}{\mathcal F}
\begin{document}
\hoffset -4pc

\title
[Extreme eigenvalues ]
{{Extreme eigenvalues of an integral operator}}
\author[A. V. Sobolev]{Alexander V. Sobolev} 
 \address{Department of Mathematics\\ University College London\\
Gower Street\\ London\\ WC1E 6BT UK}
\email{a.sobolev@ucl.ac.uk}
\keywords{Eigenvalues, asymptotics}
\subjclass[2010]{Primary 45C05, 47B06; Secondary 47A75, 45M05}

\begin{abstract}
 
We study the family of compact operators 
$B_\a = V A_\a V$, $\a>0$ in $\plainL2(\R^d)$, $d\ge 1$, 
where $A_\a$ is the pseudo-differential operator 
with symbol $a_\a(\bxi) = a(\a\bxi)$, and 
both functions $a$ and $V$ are real-valued  and decay at infinity. 
We assume that $a$ and $V$ attain their maximal values $A_0>0$, $V_0>0$ 
only at $\bxi = 0$ and $\bx = 0$. We also assume that 
\begin{align*}
a(\bxi)  = &\ A_0 - \Psi_\g(\bxi) + o(|\bxi|^\g),\ |\bxi|\to 0, \\
V(\bx) = &\ V_0 - \Phi_\b(\bx) + o(|\bx|^\b),\ |\bx|\to 0,
\end{align*}
with some functions $\Psi_\g(\bxi)>0$, $\bxi\not =0$ 
and $\Phi_\b(\bx) >0$, $\bx\not = 0$ that are homogeneous of 
degree $\g>0$ and $\b >0$ respectively. 
The main result 
is the following asymptotic formula for the eigenvalues 
$\l_\a^{(n)}$ of the operator $B_\a$ (arranged in descending order counting multiplicity) for fixed $n$ and $\a\to 0$:
\begin{equation*}
\l_\a^{(n)} = A_0V_0^2  - \mu^{(n)} \a^\s + o(\a^\s), \a\to 0,
\end{equation*}
where $\s^{-1} = \g^{-1}+ \b^{-1}$, and 
$\mu^{(n)}$ are the eigenvalues (arranged in ascending order 
counting multiplicity) of the 
model operator $T$ with symbol $V_0^2\Psi_\g(\bxi) + 2A_0 V_0 \Phi_\b(\bx)$.
\end{abstract}

\maketitle

 \section{Introduction and main result}\label{main:sect}
 
Let $a=a(\bxi), \bxi\in \R^d$, 
$V = V(\bx), \bx\in \R^d$, $d\ge 1$, 
be bounded real-valued functions 
such that $a(\bxi)\to 0$, $V(\bx)\to 0$ as 
$|\bxi|\to\infty, |\bx|\to\infty$.
Consider the self-adjoint operator on $\plainL2(\R^d)$ defined by 
\begin{equation*}
B_\a = V \CF^* a_\a \CF V,\  a_\a(\bxi) = a(\a \bxi), \a>0,
\end{equation*}
where $\CF$ is the unitary Fourier transform 
\begin{equation*}
(\mathcal F u)(\bxi) = \hat u(\bxi) = \frac{1}{(2\pi)^{\frac{d}{2}}}\int 
e^{-i \bxi\cdot\bx} u(\bx) d\bx.
\end{equation*} 
Here and further on the integral 
without indication of the domain means integration over 
the entire space $\R^d$. The operator $\CF^* a_\a \CF$ is also described 
as a pseudo-differential operator with symbol $a_\a$. 
This description however is not helpful for us as we do not use  calculus of 
pseudo-differential operators. 
It is clear that $B_\a$ is compact for all $\a>0$. 
We are interested in the asymptotics of the extreme top 
eigenvalues of the operator 
$B_\a$ as $\a\to 0$. More precisely, 
denote by $\l_\a^{(1)}, \l_\a^{(2)}, \dots$ the eigenvalues of $B_\a$ arranged in 
descending order counting multiplicity. The associated normalized pair-wise orthogonal eigenfunctions are denoted by 
$\psi_\a^{(1)}, \psi_\a^{(2)}, \dots$. 
We study the asymptotics of $\l_\a^{(n)}$ as 
$\a\to 0$ for a fixed $n$. 
This problem has been addressed in the literature in different contexts 
under different 
conditions on the functions $a$ and $V$. For example, if 
$a$ and $V$ are indicator functions of bounded intervals in $\R$, 
the  
behaviour of the 
eigenvalues was studied by D. Slepian and H.O. Pollak in \cite{SP}. 
For $d\ge 2$ this problem was analyzed by D. Slepian in \cite{Slepian} 
with $a, V$ being indicator functions 
of balls. In both cases (one- and multi-dimensional) 
the eigenvalues $\l_\a^{(n)}$ are exponentially close to $1$ as $\a\to 0$. 

In \cite{Widom_61} 
H. Widom considered the function $V$ which was the 
indicator of an interval $I$, and 
symbol $a = a(\xi),\ \xi \in\R,$  
having one global maximum at $\xi = 0$, and satisfying the condition 
\begin{align}\label{wid61:eq}
a(\xi) = A_0 - \Psi |\xi|^\g + o(|\xi|^\g),\ |\xi|\to 0,
\end{align}
with $A_0 = a(0) = \max a(\xi)>0$, 
and some $\Psi >0$, $\g >0$. It was proved that  
\begin{align}\label{Widom:eq}
\l_\a^{(n)} = A_0 - \a^\g \Psi \mu^{(n)} + o(\a^\g),\ \a\to0,
\end{align}
where $\mu^{(n)}, n = 1, 2, \dots$ are eigenvalues of the 
fractional Dirichlet Laplacian 
$(-\Delta)^{\frac{\g}{2}}$ on $I$, arranged in ascending 
order counting multiplicity. A multi-dimensional analogue of 
this result was obtained by H. Widom in \cite{Widom_63}. 
We omit its formulation 
for the sake of brevity. 
A result of the type \eqref{wid61:eq} also 
holds if $V$ is not assumed to be a simple 
indicator function, 
but attains its (positive) 
maximum on a set of positive measure,  see \cite{Rom}.

For applications 
to transport problems 
(see \cite{BW} and \cite{KNR})   
it is also useful to investigate the case where 
both functions $a$ and $V$ 
have unique power-like maxima. 
This is exactly the case that we study in the present paper. 
The precise conditions on $a$ and $V$ are described below.
By $C, c$ (with or without indices) we denote various positive constants whose 
precise value is of no importance.

\begin{cond}\label{aV:cond}
\begin{enumerate}
\item \label{decay:item}
$a$ and $V$ are real-valued $\plainL\infty$-functions such that 
$a(\bxi)\to 0$ as $|\bxi|\to \infty$, and 
$V(\bx)\to 0$ as $|\bx|\to \infty$.
\item \label{global:item}
The functions 
$a$ and $V$ attain their global maxima only 
at $\bxi = \mathbf 0$ and $\bx = \mathbf 0$ respectively:
\begin{align*}
A_0: = \max a(\bxi)>0,\ V_0:= \max V(\bx)>0.
\end{align*}
The function $V$ satisfies the condition $-V_0+c\le V(\bx)\le V_0$, 
$\bx$ a.e.,  
with a positive constant $c$.  
\item \label{asymp:item} 
Let $\Phi_\b, \Psi_\g\in\plainC\infty(\R^d\setminus\{0\})$ 
be some real-valued functions, homogeneous of degree 
$\b>0$ and $\g>0$ respectively, positive at $\bx\not =0$.
%
The functions $V$ and $a$ satisfy the properties
\begin{equation}\label{V:eq}
V(\bx) = V_0 - \Phi_\b(\bx) + o(|\bx|^\b),\ |\bx|\to 0,
\end{equation}
and 
\begin{equation}\label{a:eq}
a(\bxi) = A_0 - \Psi_\g(\bxi) + o(|\bxi|^\g),\ |\bxi|\to 0.
\end{equation}
\end{enumerate}
\end{cond}

The results are described with the help of the following 
model pseudo-differential operator $T$ defined formally by its symbol
\begin{equation}\label{T:eq}
t(\bx, \bxi) = V_0^2 \Psi_\g(\bxi)   +  2 A_0 V_0 \Phi_\b(\bx).
\end{equation}
The operator $T$ is essentially self-adjoint on
$\plainC\infty_0(\R)$, and has a purely discrete
spectrum (see e.g. \cite[Theorems 26.2, 26.3]{Sch}). 
The same operator can be also 
defined (see \cite[p. 229, Theorem 1]{BS}) as the unique self-adjoint operator associated with the quadratic form
\begin{align}\label{tform:eq}
T[u, v] = V_0^2 \int \Psi_\g(\bxi)\hat u(\bxi)\overline{\hat v(\bxi)} d\bxi 
+ 2 A_0 V_0 \int \Phi_\b(\bx) u(\bx) \overline{v(\bx)} d\bx,
\end{align}
which is closed on $D[T] = \plainH{\frac{\g}{2}}(\R)\cap\plainL2(\R, |x|^{\b})$. 
We use the notation $T[u] = T[u, u]$. 
Recall that in view of the polarization identity, 
the form $T[w], w\in D[T]$, determines $T[u, v]$ for all $u, v\in D[T]$. 
Denote by $\mu^{(n)}>0$, $n = 1, 2, \dots$ 
the eigenvalues of $T$ arranged in ascending order counting multiplicity, 
and by $\phi^{(n)}$ 
-- an orthonormal basis of corresponding normalized eigenfunctions.

Let $\s$ be the number found from the equation 
\begin{equation*}
\frac{1}{\s} = \frac{1}{\b} + \frac{1}{\g}.
\end{equation*}
The next theorem constitutes the main result of the paper. 

\begin{thm}\label{main:thm} 
Suppose that the functions $a$ and $V$ satisfy Condition 
\ref{aV:cond}. Then for any $n = 1, 2, \dots$, the asymptotics hold:  
\begin{equation}\label{main_symm:eq}
\lim_{\a\to 0} \a^{-\s}(A_0 V_0^2 - \l_\a^{(n)}) = \mu^{(n)}.
\end{equation}
\end{thm}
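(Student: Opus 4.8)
The plan is to recast the problem for the non-negative compact perturbation $Q_\a := A_0 V_0^2 I - B_\a$ and to analyse its \emph{bottom} eigenvalues $\nu_\a^{(n)} := A_0 V_0^2 - \l_\a^{(n)}$, which accumulate at the edge $A_0V_0^2$ of the essential spectrum; \eqref{main_symm:eq} is exactly the statement $\a^{-\s}\nu_\a^{(n)}\to\mu^{(n)}$. Since $\langle B_\a u,u\rangle = \int a(\a\bxi)\,|\widehat{Vu}(\bxi)|^2\,d\bxi$, inserting $a(\a\bxi) = A_0 - (A_0-a(\a\bxi))$ and $A_0\|Vu\|^2 = A_0V_0^2\|u\|^2 - A_0\int(V_0^2-V^2)|u|^2$ gives the decomposition
\begin{equation*}
Q_\a[u] = A_0\int (V_0^2 - V(\bx)^2)\,|u(\bx)|^2\,d\bx + \int (A_0 - a(\a\bxi))\,|\widehat{Vu}(\bxi)|^2\,d\bxi .
\end{equation*}
By Condition \ref{aV:cond} both integrands are non-negative, so $Q_\a\ge0$; near the origin $V_0^2-V^2 = 2V_0\Phi_\b(1+o(1))$ and $A_0-a = \Psi_\g+o(|\cdot|^\g)$ reproduce the two terms of the symbol $t$ in \eqref{T:eq}.

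Introduce the $\plainL2$-unitary dilation $(U_\tau u)(\bx)=\tau^{d/2}u(\tau\bx)$ with $\tau = \a^{-\g/(\b+\g)}\to\infty$. Under $U_\tau$ a multiplier homogeneous of degree $\b$ in $\bx$ scales by $\tau^{-\b}$ and a Fourier multiplier homogeneous of degree $\g$ scales by $\tau^{\g}$, and because $\s^{-1}=\b^{-1}+\g^{-1}$ the two scales balance:
\begin{equation*}
\tau^{-\b} = (\a\tau)^{\g} = \a^{\s}.
\end{equation*}
Substituting $\bx\mapsto\tau^{-1}\bx$ in the first term and $\bxi\mapsto\tau\bxi$ in the second, and invoking the homogeneity of $\Phi_\b,\Psi_\g$ together with \eqref{V:eq}, \eqref{a:eq}, I would show that for every fixed $u$ in the Schwartz class $\mathcal S(\R^d)$
\begin{equation*}
\a^{-\s}Q_\a[U_\tau u]\ \longrightarrow\ 2A_0V_0\int\Phi_\b(\bx)|u|^2\,d\bx + V_0^2\int\Psi_\g(\bxi)|\hat u|^2\,d\bxi = T[u],
\end{equation*}
the $o$-remainders being dispatched by dominated convergence. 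The only non-diagonal point is that the frequency term involves $\widehat{Vu}$ and not $V_0\hat u$; writing $V=V_0-(V_0-V)$ and using Cauchy--Schwarz, the cross term is bounded by $C\a^{\s/2}\|(V_0-V)U_\tau u\|$, and since $\|(V_0-V)U_\tau u\|^2=O(\tau^{-2\b})=O(\a^{2\s})$ both this and the purely quadratic $(V_0-V)$-remainder are $o(\a^\s)$.

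The upper bound $\limsup_\a\a^{-\s}\nu_\a^{(n)}\le\mu^{(n)}$ now follows from the min--max principle: the eigenfunctions $\phi^{(1)},\dots,\phi^{(n)}$ of $T$ are orthonormal and lie in $D[T]$ (the form convergence above extends there by density and a uniform form bound), so the unitary images $U_\tau\phi^{(1)},\dots,U_\tau\phi^{(n)}$ form an orthonormal trial family with $\max_{u\in\operatorname{span}\{U_\tau\phi^{(j)}\}}\a^{-\s}Q_\a[u]\le\mu^{(n)}+o(1)$.

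The lower bound $\liminf_\a\a^{-\s}\nu_\a^{(n)}\ge\mu^{(n)}$ is the heart of the matter and the step I expect to be the main obstacle, since pointwise form convergence cannot by itself exclude spurious low-lying spectrum generated by mass escaping to large $|\bx|$ (in rescaled variables) or to extreme frequencies. The plan is to establish an equi-coercive lower bound
\begin{equation*}
\a^{-\s}Q_\a[U_\tau u]\ \ge\ (1-\e)\Big(2A_0V_0\int\Phi_\b|u|^2 + V_0^2\int\Psi_\g|\hat u|^2\Big) - C_\e\|u\|^2,
\end{equation*}
uniform in small $\a$ for each $\e>0$. This rests on two global consequences of Condition \ref{aV:cond}: because $V$ attains $V_0$ only at $\bx=0$ and $-V_0+c\le V\le V_0$, one has $V_0^2-V(\bx)^2\ge 2V_0(1-\e)\Phi_\b(\bx)$ for $|\bx|\le r_0(\e)$ and $V_0^2-V^2\ge c(r_0)>0$ beyond, which after rescaling confines the position variable; likewise $A_0-a(\bxi)\ge c\,\Psi_\g(\bxi)$ near $\bxi=0$ and $\ge c'>0$ away from it confines the frequency. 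Since $D[T]=\plainH{\frac{\g}{2}}(\R^d)\cap\plainL2(\R^d,|\bx|^\b)$ embeds compactly into $\plainL2(\R^d)$, the coercivity yields, along any sequence $u_\a\rightharpoonup u$, the $\G$-liminf inequality $\liminf_\a\a^{-\s}Q_\a[U_\tau u_\a]\ge T[u]$. The lower bound then follows by contradiction: if $\a^{-\s}\nu_\a^{(n)}\le\mu^{(n)}-\delta$ along a subsequence, the rescaled eigenfunctions $U_\tau^{-1}\psi_\a^{(1)},\dots,U_\tau^{-1}\psi_\a^{(n)}$ are bounded in the form norm, hence precompact in $\plainL2$; their weak limits are $n$ orthonormal elements of $D[T]$ with $T[\cdot]\le\mu^{(n)}-\delta$, contradicting the min--max characterisation of $\mu^{(n)}$. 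The genuine technical difficulty throughout is to make the near/far splittings interact correctly with the $V\ne V_0$ correction and with the error terms $o(|\bx|^\b)$, $o(|\bxi|^\g)$ uniformly in the rescaling.
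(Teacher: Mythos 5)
Your setup coincides with the paper's: the decomposition of $Q_\a[u]$ into a position term with $V_0^2-V^2$ and a frequency term with $A_0-a$ applied to $\widehat{Vu}$ is exactly the identity \eqref{BKS:eq} (there the balancing dilation is absorbed into the operator rather than into trial functions), your pointwise form convergence is Lemma \ref{remainder:lem}, and your min--max upper bound with the trial family $U_\tau\phi^{(1)},\dots,U_\tau\phi^{(n)}$ is the proof of \eqref{limsup:eq}. The genuine gap is in the lower bound, precisely where you anticipated difficulty, and it occurs twice. First, the equi-coercive inequality you propose is false as stated: after rescaling, the potential difference obeys $\a^{-\s}\bigl(V_0^2-V(\bx/\tau)^2\bigr)\le V_0^2\,\a^{-\s}$, a function bounded for each fixed $\a$, whereas the weight $\Phi_\b$ on the right-hand side is unbounded; a unit bump translated to distance $L$ with $L^\b\gg\a^{-\s}$ violates the inequality, and the frequency term saturates in the same way since $A_0-a$ is bounded. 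Consequently you also cannot conclude that the rescaled eigenfunctions are bounded in the $D[T]$-form norm, and the compact embedding $D[T]\hookrightarrow\plainL2(\R^d)$ is not directly applicable. The correct uniform statement is coercivity against the \emph{truncated} weights $\min\bigl(c\,\Phi_\b(\bx),\,c\,\a^{-\s}\bigr)$ and $\min\bigl(c\,\Psi_\g(\bxi),\,c\,\a^{-\s}\bigr)$, which yields not form-boundedness but tightness of the eigenfunctions in both variables, $\|\psi_\a(1-\chi_R)\|^2\le C\mu R^{-\b}$ and $\|\hat\psi_\a(1-\chi_R)\|^2\le C\mu R^{-\g}$; these are exactly the paper's localization estimates \eqref{truncxi:eq}--\eqref{truncx:eq}, and it is this two-sided tightness (via Proposition \ref{weak:prop}), not a compact embedding, that upgrades weak convergence of eigenfunctions to norm convergence, with membership of the limit in $D[T]$ recovered afterwards by Fatou, as in Lemma \ref{norm:lem}.

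Second, your concluding contradiction does not follow as written: producing $n$ orthonormal limits with the diagonal bounds $T[\psi^{(j)}]\le\mu^{(n)}-\delta$ does not contradict the min--max characterisation of $\mu^{(n)}$, because the off-diagonal entries $T[\psi^{(j)},\psi^{(k)}]$ are uncontrolled; Cauchy--Schwarz only gives $\max T[u]\le n(\mu^{(n)}-\delta)$ over unit vectors $u$ in the span, which is no contradiction for $n\ge 2$. This is repairable: either apply your liminf inequality to arbitrary fixed linear combinations of the first $n$ eigenfunctions (each such combination lies in the bottom-$n$ spectral subspace of $Q_\a$, so its Rayleigh quotient is at most $\a^\s(\mu^{(n)}-\delta)$, which bounds $T$ on the whole limit span), or combine the per-index bounds $T[\psi^{(j)}]\le\mu^{(j)}$, available from the already-proven upper bound, with a trace-plus-interlacing argument. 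The paper avoids the issue by a different identification step: the weak approximate eigen-equation \eqref{quadrat1:eq}, tested against the eigenfunctions $\phi^{(j)}$ of $T$, forces each norm limit $\psi^{(n)}$ to be a genuine eigenfunction of $T$, and an induction on $n$ using orthogonality of the limits and \eqref{limsup:eq} pins its eigenvalue to exactly $\mu^{(n)}$, yielding \eqref{two:eq} in one stroke. With the truncated coercivity and one of these identification repairs your scheme does go through, and it is then essentially the paper's argument in $\Gamma$-convergence packaging; but as written, both the coercivity step and the final min--max step fail.
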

 
Let us make a few remarks. 
 
Note that formally, the asymptotics \eqref{main_symm:eq} imply 
\eqref{Widom:eq} if one takes $d = 1$ and $\b = \infty$.  

Observe also that a model operator of the form \eqref{T:eq} was featured 
in \cite{MitSob} where the norm of a special 
self-adjoint integral operator with properties similar to $B_\a$, was 
 studied. 

One could also examine the case when one or both of the functions $a, V$ 
attain their respective maximum values at several points, and 
have there the asymptotics of the type \eqref{V:eq} and \eqref{a:eq}. The author believes that 
this problem can be tackled by 
standard methods via decoupling distinct maximal points, thereby reducing 
the issue to the case of a single maximum. 

Conceptually, 
the proof of Theorem \ref{main:thm} follows the paper 
\cite{Widom_61}, but the technical details are quite different: for instance, 
the model operator $T$ replaces the fractional Laplacian 
used in \cite{Widom_61}.

\section{Preliminary estimates. Lower bounds for the top eigenvalues}

Throughout the paper 
we assume that Condition \ref{aV:cond} is satisfied. 
Without loss of generality we may assume that $A_0 = V_0 = 1$.

Using the unitary scaling transformation reduce the studied operator to the operator
\begin{equation*}
B_\a = W_\a \op(b_\a) W_\a,
\end{equation*}
where $W_\a, a_\a$ are defined in the following way:
\begin{equation*}
W_\a (\bx) = V\bigl(\a^{\frac{\g}{\g+\b}} \bx\bigr), \ \ 
b_\a(\bxi) = a\bigl(\a^{\frac{\b}{\g+\b}}\bxi\bigr).
\end{equation*}
Note that slightly abusing the notation we use for 
the unitarily equivalent operator the same 
symbol $B_\a$. This will not cause any confusion. 
For thus defined functions $W_\a$ and $b_\a$ the conditions 
\eqref{V:eq} and \eqref{a:eq} imply that 
\begin{align}\label{blim:eq}
\underset{\a\to 0}\lim\ \a^{-\s}\bigl(1-b_\a(\bxi)\bigr)
= \Psi_\g(\bxi),\ \forall\bxi\in\R^d,  
\end{align}
and
\begin{align}\label{Wlim:eq}
\underset{\a\to 0}\lim \ \a^{-\s}\bigl(1-W_\a(\bx)^2\bigr)
= 2\Phi_\b(\bx),\ \forall\bx\in\R^d. 
\end{align}
Both convergences are uniform in $\bx$ and $\bxi$ varying over 
compact sets. 

Here is another useful property of the family $W_\a$:

\begin{lem}\label{Vest:lem}
For any $u\in D[T]$, we have
\begin{equation}\label{Vest:eq}
\a^{-\s}\int |W_\a(\bx) - 1|^2 |u(\bx)|^2 d\bx
\to 0, \ \a\to 0.
\end{equation}
\end{lem}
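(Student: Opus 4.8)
The plan is to prove \eqref{Vest:eq} by dominated convergence. First I would note that for each fixed $\bx$ the integrand tends to $0$, so the whole problem reduces to producing an $\a$-independent integrable majorant for $\a^{-\s}|W_\a(\bx)-1|^2$. The key arithmetic fact driving everything is that, writing $\rho = \a^{\frac{\g}{\g+\b}}$ for the scaling parameter in $W_\a(\bx) = V(\rho\bx)$, one has $\rho^\b = \a^{\frac{\b\g}{\b+\g}} = \a^\s$, since $\s^{-1} = \b^{-1}+\g^{-1}$. Thus the prefactor $\a^{-\s}$ will exactly cancel a factor $\rho^\b$ coming from the $|\bx|^\b$-type vanishing of $1 - V$ near the origin.

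For the majorant I would first establish a global pointwise bound of the form $(1 - V(\by))^2 \le C|\by|^\b$, valid for all $\by\in\R^d$. For $|\by|$ small this follows from \eqref{V:eq}: since $\Phi_\b$ is homogeneous of degree $\b$ and positive on the unit sphere, one gets $0 \le 1 - V(\by) \le C|\by|^\b$, whence $(1-V(\by))^2 \le C|\by|^{2\b} \le C|\by|^\b$ on a ball $|\by|\le\delta_0\le 1$; for $|\by|\ge\delta_0$ one simply uses $0\le 1 - V(\by)\le 2$ together with $|\by|^\b \ge \delta_0^\b$. Rescaling $\by = \rho\bx$ and invoking $\rho^\b = \a^\s$ then gives, uniformly in $\a$,
\[
\a^{-\s}|W_\a(\bx) - 1|^2 = \a^{-\s}(1 - V(\rho\bx))^2 \le C\a^{-\s}\rho^\b|\bx|^\b = C|\bx|^\b.
\]
Since $u\in D[T]\subset\plainL2(\R^d, |\bx|^\b)$, the function $\bx\mapsto C|\bx|^\b|u(\bx)|^2$ is integrable and serves as the required majorant.

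It remains to check the pointwise convergence of the integrand to $0$. Here I would avoid re-deriving a Taylor expansion and instead use the already established limit \eqref{Wlim:eq} directly, by factoring
\[
\a^{-\s}|W_\a(\bx) - 1|^2 = \bigl(\a^{-\s}(1 - W_\a(\bx)^2)\bigr)\,\frac{1 - W_\a(\bx)}{1 + W_\a(\bx)}.
\]
By Condition \ref{aV:cond} the denominator satisfies $1 + W_\a(\bx)\ge c > 0$, while $W_\a(\bx) = V(\rho\bx)\to 1$ as $\a\to 0$ for each fixed $\bx$ (because $V(\by)\to 1$ as $\by\to\mathbf 0$ by \eqref{V:eq}); hence the second factor tends to $0$. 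Combined with \eqref{Wlim:eq}, which makes the first factor converge to the finite value $2\Phi_\b(\bx)$, the product tends to $0$ pointwise. Dominated convergence with the majorant above then yields \eqref{Vest:eq}. The only genuinely delicate point is the behaviour at large $|\bx|$, where $|W_\a - 1|$ does not become small and the crude bound $\a^{-\s}|W_\a-1|^2\le 4\a^{-\s}$ blows up; this is precisely where the weight $|\bx|^\b$ furnished by the form domain $D[T]$ must be exploited, and it is the identity $\a^{-\s}\rho^\b = 1$ that makes the weighted bound uniform in $\a$.
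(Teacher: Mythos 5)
Your proof is correct, and it reaches \eqref{Vest:eq} by a slightly different mechanism than the paper. The core ingredients coincide: the paper's proof uses exactly the pair of bounds $|W_\a-1|^2\le C|W_\a-1|$ (uniform boundedness of $1-W_\a$) and $|W_\a-1|\le C\a^\s|\bx|^\b$ (the global $|\by|^\b$-bound on $1-V$ combined with the scaling identity $\rho^\b=\a^\s$), whose product is precisely your majorant $\a^{-\s}|W_\a(\bx)-1|^2\le C|\bx|^\b$. Where you diverge is in how the limit is extracted. The paper never invokes pointwise convergence: it splits the integral at $|\bx|=R$, uses the stronger bound $\a^{-\s}|W_\a-1|^2\le C\a^{\s}|\bx|^{2\b}\le C\a^\s R^\b|\bx|^\b$ on the inner region and the bound $C|\bx|^\b$ only on the tail, and then takes $\a\to0$ followed by $R\to\infty$; smallness comes from the extra factor $\a^\s$ inside and from the convergence of $\int|\bx|^\b|u|^2\,d\bx$ outside. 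You instead package the same information as a dominated convergence argument, supplying the pointwise limit via the factorization $\a^{-\s}(1-W_\a)^2=\a^{-\s}(1-W_\a^2)\cdot(1-W_\a)/(1+W_\a)$, together with \eqref{Wlim:eq} and the a.e.\ lower bound $1+W_\a\ge c$ from Condition \ref{aV:cond}(\ref{global:item}). That is legitimate, since \eqref{Wlim:eq} is established before the lemma; note, though, that you do not actually need it or the lower bound on $1+W_\a$: your own majorant already gives $\a^{-\s}(1-W_\a(\bx))^2\le C|\bx|^\b\,(1-W_\a(\bx))\to 0$ pointwise, since $W_\a(\bx)\to1$ by \eqref{V:eq}, which would shorten the argument. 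The trade-off is minor: your DCT route avoids the bookkeeping of iterated limits, while the paper's two-region estimate is entirely quantitative and needs no pointwise convergence at all.
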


\begin{proof} 
The function $W_\a-1$ 
is bounded uniformly in $\bx$ and $\a$, so that 
\begin{align*}
|W_\a(\bx)-1|^2\le C |W_\a(\bx)-1|,\ \bx\in\R^d.
\end{align*}
On the other hand, 
\begin{align*}
|W_\a(\bx)-1|\le C\a^\s |\bx|^\b,\ \bx\in\R^d.
\end{align*}
Therefore, for any $R>0$, we can estimate as follows:
\begin{align*}
\a^{-\s}\int |W_\a(\bx) - &\ 1|^2 |u(\bx)|^2 d\bxi\\[0.2cm]
\le &\ \a^{-\s}\int_{|\bx| < R} 
|W_\a(\bx) -1|^2 |u(\bx)|^2 d\bx
+ C\a^{-\s}
\int_{|\bx| \ge R} 
|W_\a(\bx) -1| |u(\bx)|^2 d\bx\\[0.2cm]
\le &\ C\a^{\s} \int_{|\bx|<R} |\bx|^{2\b} | u(\bx)|^2 d\bx
 + C\int_{|\bxi|>R} |\bx|^\b |u(\bx)|^2 d\bx\\[0.2cm]
\le &\ C\a^{\s} R^\b\int_{|\bx|<R} |\bx|^{\b} |u(\bx)|^2 d\bxi 
 + C\int_{|\bx|>R} |\bx|^\b |u(\bx)|^2 d\bx.
\end{align*}
Both integrals on the right-hand side 
are finite, since $u\in D[T]$, and the second one tends 
to zero as $R\to \infty$.  
Thus, passing first to the limit $\a\to 0$, and 
then taking $R\to \infty$, we conclude that the right-hand side tends to zero as $\a\to 0$, as claimed.  
\end{proof}

Now we show that in some suitable sense 
the operator $B_\a$ can be approximated by the operator 
$I-\a^\s T$ as 
$\a\to 0$. 
Define the form
\begin{equation}\label{ralpha:eq}
R_{\a}[u] =  
(B_{\a}u, u) - \|u\|^2 + \a^{\s} T[u],
\end{equation} 
which is closed on the domain $D[T]$, and two more forms 
\begin{align}
K_\a[u, v] 
= &\ \a^{-\s}\int (1-b_\a(\bxi)) \hat u(\bxi) \overline{ \hat v(\bxi)}
d\bxi, \label{Kform:eq}\\[0.2cm]
S_\a[u, v] = &\ \a^{-\s}\int \bigl(1-W_\a(\bx)^2\bigr) 
u(\bx)\overline{v(\bx)}d\bx,\label{Sform:eq}
\end{align}
that are defined for all $u, v\in\plainL2(\R^d)$. 
It is easily checked that with $w_\a = W_\a u, y_\a = W_\a v$, we have 
\begin{align}\label{BKS:eq}
\a^{-\s}\bigl((u, v) - (B_\a u, v)\bigr)
= K_\a[w_\a, y_\a] + S_\a[u, v],\ 
\end{align}
and 
\begin{align}\label{ra:eq}
R_\a[u, v] = \a^\s \bigl(T[u, v] - K_\a[w_\a, y_\a] - S_\a[u, v]\bigr). 
\end{align} 
Note that $K_\a[u]\ge 0$ and $S_\a[u]\ge 0$ for all $\a>0$.  
Also, due to \eqref{blim:eq} and \eqref{Wlim:eq}, for any 
$u\in D[T]$ we have 
\begin{align}\label{KS:eq}
K_\a[u]\le C\int |\bxi|^\g |\hat u(\bxi)|^2d \bxi,\
S_\a[u]\le C\int |\bx|^\b |u(\bx)|^2 d\bx, 
\end{align}
with a constant $C$ independent of $u$. 
Moreover, for any $u\in D[T]$ we also have 
\begin{align}\label{Klim_0:eq}
\underset{\a\to 0}\lim \ K_\a[u]
= \int \Psi_\g(\bxi)|\hat u(\bxi)|^2 d\bxi,
\end{align}
and 
\begin{align}\label{Slim_0:eq}
\underset{\a\to 0}\lim \ 
S_\a[u]
= 2\int \Phi_\b(\bx) |u(\bx)|^2 d\bx, 
\end{align}
by the Dominated Convergence Theorem.

\begin{lem}\label{remainder:lem} 
For any $u\in D[T]$ and $w_\a = W_\a u$, we have  
\begin{align}\label{Kerr:eq}
K_\a[w_\a-u]\to 0,\ \a\to 0.
\end{align}
Also, for any $u, v\in D[T]$ we have 
\begin{align}\label{remainder:eq}
\a^{-\s} |R_\a[u, v]|\to 0, \a\to 0.
\end{align}
\end{lem}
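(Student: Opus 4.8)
The plan is to dispatch \eqref{Kerr:eq} first by the crudest available bound on $K_\a$, and then feed it into the proof of \eqref{remainder:eq}. For \eqref{Kerr:eq}, the essential observation is that one should \emph{not} try to control $K_\a[w_\a - u]$ through the weighted estimate \eqref{KS:eq}: that would amount to bounding a homogeneous $\plainH{\g/2}$-seminorm of the product $(W_\a - 1)u$, which is awkward since $W_\a$ inherits only $\plainL\infty$-regularity from $V$. Instead, since $b_\a(\bxi) = a(\a^{\b/(\g+\b)}\bxi)\le A_0 = 1$ and $a$ is bounded, one has $0\le 1 - b_\a(\bxi)\le 1 + \|a\|_{\plainL\infty}$ uniformly in $\bxi$ and $\a$. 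By Plancherel this yields the blunt estimate
\begin{equation*}
K_\a[f]\le C\a^{-\s}\|f\|^2,\qquad f\in\plainL2(\R^d),
\end{equation*}
with $C$ independent of $\a$ and $f$. Applying it with $f = w_\a - u = (W_\a - 1)u$ gives $K_\a[w_\a - u]\le C\a^{-\s}\int|W_\a - 1|^2|u|^2\,d\bx$, whose right-hand side tends to $0$ by Lemma \ref{Vest:lem}. This proves \eqref{Kerr:eq}, and the same bound applied to $v$ gives $K_\a[y_\a - v]\to 0$ for $y_\a = W_\a v$.

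For \eqref{remainder:eq}, I would start from the identity \eqref{ra:eq}, namely $\a^{-\s}R_\a[u,v] = T[u,v] - K_\a[w_\a, y_\a] - S_\a[u,v]$. The diagonal limits \eqref{Klim_0:eq} and \eqref{Slim_0:eq}, together with the polarization identity applied to the nonnegative Hermitian forms $K_\a$ and $S_\a$ (each vector $u + i^k v$ lies in $D[T]$), show that
\begin{equation*}
K_\a[u,v]\to\int\Psi_\g(\bxi)\hat u(\bxi)\overline{\hat v(\bxi)}\,d\bxi,\qquad
S_\a[u,v]\to 2\int\Phi_\b(\bx)u(\bx)\overline{v(\bx)}\,d\bx,
\end{equation*}
so that $T[u,v] - K_\a[u,v] - S_\a[u,v]\to 0$ by the definition \eqref{T:eq} of $T$ (recall $A_0 = V_0 = 1$). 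It therefore remains to show that $K_\a[w_\a, y_\a] - K_\a[u,v]\to 0$.

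Here I would split the difference using the sesquilinearity of $K_\a$,
\begin{equation*}
K_\a[w_\a, y_\a] - K_\a[u, v] = K_\a[w_\a - u, y_\a] + K_\a[u, y_\a - v],
\end{equation*}
and estimate each term by the Cauchy--Schwarz inequality for the nonnegative form $K_\a$:
\begin{equation*}
|K_\a[w_\a - u, y_\a]|\le K_\a[w_\a - u]^{\frac12}K_\a[y_\a]^{\frac12},\qquad
|K_\a[u, y_\a - v]|\le K_\a[u]^{\frac12}K_\a[y_\a - v]^{\frac12}.
\end{equation*}
The factors $K_\a[w_\a - u]$ and $K_\a[y_\a - v]$ tend to $0$ by \eqref{Kerr:eq}; the factor $K_\a[u]$ is bounded by \eqref{Klim_0:eq}; and $K_\a[y_\a]$ is bounded because the triangle inequality for the seminorm $K_\a[\cdot]^{\frac12}$ gives $K_\a[y_\a]^{\frac12}\le K_\a[y_\a - v]^{\frac12} + K_\a[v]^{\frac12}$, both summands being bounded. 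Hence both cross terms vanish in the limit, and \eqref{remainder:eq} follows.

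The step I expect to require the only genuine insight is the proof of \eqref{Kerr:eq}: recognizing that the sharp weighted bound \eqref{KS:eq} is a trap, and that the blunt $\plainL2$-estimate feeding off Lemma \ref{Vest:lem} is precisely what succeeds. Once \eqref{Kerr:eq} is secured, \eqref{remainder:eq} reduces to a routine polarization-plus-Cauchy--Schwarz argument in which the awkward cross terms are factored through quantities that are known to be small or bounded.
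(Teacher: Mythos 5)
Your proposal is correct and takes essentially the same approach as the paper: \eqref{Kerr:eq} is proved exactly as there, via the uniform bound $0\le 1-b_\a\le C$, Plancherel, and Lemma \ref{Vest:lem}. For \eqref{remainder:eq} the only difference is organizational — you polarize $K_\a$ and $S_\a$ and work with the off-diagonal forms throughout, while the paper polarizes once at the level of $R_\a$ (reducing to $u=v$) and then expands $K_\a[w_\a]$ about $K_\a[u]$, with the same Cauchy--Schwarz control of the cross terms and the same diagonal limits \eqref{Klim_0:eq} and \eqref{Slim_0:eq}.
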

 
 \begin{proof}   
Proof of \eqref{Kerr:eq}. Estimate: 
\begin{align*}
K_\a[w_\a-u] 
\le   
 &\ C \a^{-\s} \int |w_\a(\bx) - u(\bx)|^2 d\bx\\
 = &\ C \a^{-\s} \int \bigl(1-W_\a(\bx)\bigr)^2 |u(\bx)|^2 d\bx.
\end{align*}
Here we used the fact that $0\le 1-b_\a\le C$ with some constant $C$.  
The right-hand side tends to zero by \eqref{Vest:eq}.

It suffices to prove \eqref{remainder:eq} for $u = v$. Consider 
separately the terms in the representation \eqref{ra:eq}.  
Write:
\begin{align*}
K_\a[w_\a]
= K_\a[u] + 2\re K_\a[u, w_\a-u] + K_\a[w_\a-u]. 
\end{align*}
The last term tends to zero by \eqref{Kerr:eq}. 
Now estimate the second term:
\begin{align*}
|K_\a[u, w_\a-u]|^2
\le K_\a[u] K_\a[w_\a-u].
\end{align*}
In view of \eqref{KS:eq}, the first factor is uniformly bounded, 
and the second one 
tends to zero. 
Thus 
\begin{align*}
K_\a[w_\a] - K_\a[u]\to 0,\ \a\to 0.
\end{align*}
Together with \eqref{Klim_0:eq} and \eqref{Slim_0:eq} this implies that 
\begin{align*}
\underset{\a\to 0}\lim\bigl(K_\a[w_\a] + S_\a[u]\bigr)
= T[u],
\end{align*}
see \eqref{tform:eq}. 
Due to \eqref{ra:eq} this implies \eqref{remainder:eq}. 
\end{proof}

The lower bound for the eigenvalues $\l_\a^{(n)}$, i.e. 
the upper bound for the left-hand side 
of \eqref{main_symm:eq}, is rather straightforward. 

\begin{lem} 
For all $n = 1, 2, \dots$, we have  
\begin{equation}\label{limsup:eq}
\limsup_{\a\to\infty}
\a^{-\s}(1 -  \l^{(n)}_\a)\le \mu^{(n)}.
\end{equation}
\end{lem}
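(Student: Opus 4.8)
First, the limit in \eqref{limsup:eq} is to be read as $\a\to0$: this is the regime of Theorem \ref{main:thm}, and the only one in which the approximation $B_\a\approx I-\a^\s T$ built above applies, since the expansions \eqref{blim:eq}, \eqref{Wlim:eq} and the remainder bound \eqref{remainder:eq} are all assertions about $\a\to0$. (Taken literally as $\a\to\infty$ the inequality is disconnected from the construction of this section, the forms $K_\a$, $S_\a$, $R_\a$ having been designed around the small-$\a$ expansion.) The plan is to obtain the asserted upper bound for $\a^{-\s}(1-\l_\a^{(n)})$, equivalently the lower bound $\l_\a^{(n)}\ge 1-\a^\s(\mu^{(n)}+o(1))$, directly from the Courant--Fischer max--min principle, using the first $n$ eigenfunctions of the model operator $T$ as trial functions.

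Concretely, I would set $\mathcal{L}_n=\operatorname{span}\{\phi^{(1)},\dots,\phi^{(n)}\}\subset D[T]$. Since $B_\a$ is compact and self-adjoint with eigenvalues $\l_\a^{(1)}\ge\l_\a^{(2)}\ge\cdots$, the max--min principle yields
\begin{equation*}
\l_\a^{(n)}\ge\min_{u\in\mathcal{L}_n,\ \|u\|=1}(B_\a u,u).
\end{equation*}
For a unit vector $u\in\mathcal{L}_n$ the defining identity \eqref{ralpha:eq} gives $(B_\a u,u)=1-\a^\s T[u]+R_\a[u]$, and on $\mathcal{L}_n$ one has $T[u]\le\mu^{(n)}\|u\|^2=\mu^{(n)}$, because the $\mu^{(k)}$ are arranged in ascending order and $u$ lies in the span of the first $n$ eigenfunctions. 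Hence $(B_\a u,u)\ge 1-\a^\s\mu^{(n)}-|R_\a[u]|$, and the whole statement reduces to showing that $R_\a[u]$ is uniformly small on the unit sphere of $\mathcal{L}_n$.

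The uniform control of $R_\a$ is the one step that needs care and is where I expect the only real work, since Lemma \ref{remainder:lem} supplies merely the pointwise statement $\a^{-\s}R_\a[\phi^{(i)},\phi^{(j)}]\to0$ for each fixed pair $i,j\le n$. To upgrade this to uniformity I would exploit that $\mathcal{L}_n$ is finite dimensional: expanding $u=\sum_{i=1}^n c_i\phi^{(i)}$ with $\sum_i|c_i|^2=1$ and using sesquilinearity of $R_\a$,
\begin{equation*}
\a^{-\s}|R_\a[u]|\le\sum_{i,j=1}^n|c_i|\,|c_j|\,\a^{-\s}|R_\a[\phi^{(i)},\phi^{(j)}]|\le n\max_{i,j\le n}\a^{-\s}|R_\a[\phi^{(i)},\phi^{(j)}]|=:\vare_\a,
\end{equation*}
where $\bigl(\sum_i|c_i|\bigr)^2\le n\sum_i|c_i|^2=n$ and $\vare_\a\to0$ as $\a\to0$ by \eqref{remainder:eq}. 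Combining the estimates gives $(B_\a u,u)\ge 1-\a^\s(\mu^{(n)}+\vare_\a)$ uniformly over unit $u\in\mathcal{L}_n$, whence $\l_\a^{(n)}\ge1-\a^\s(\mu^{(n)}+\vare_\a)$, that is $\a^{-\s}(1-\l_\a^{(n)})\le\mu^{(n)}+\vare_\a$; letting $\a\to0$ yields \eqref{limsup:eq}. The matching bound in the opposite direction, the genuinely harder upper bound for $\l_\a^{(n)}$, is a separate argument not needed here.
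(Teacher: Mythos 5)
Your proposal is correct and follows essentially the same route as the paper: the max--min principle applied on $\CK_n=\operatorname{span}\{\phi^{(1)},\dots,\phi^{(n)}\}$, the identity \eqref{ralpha:eq} with $T[u]\le\mu^{(n)}$ on the unit sphere of $\CK_n$, and the uniform bound $n\max_{j,k}|R_\a[\phi^{(j)},\phi^{(k)}]|$ obtained by expanding in the eigenbasis, concluded via \eqref{remainder:eq}. You are also right that $\a\to\infty$ in \eqref{limsup:eq} is a typo for $\a\to0$, consistent with \eqref{main_symm:eq}.
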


\begin{proof}
Let $\CK_n\subset \plainL2(\R^d)$, $n\ge 1$, be the span of the 
eigenfunctions $\phi^{(1)}, \phi^{(2)}, \dots, \phi^{(n)}$, 
so $\dim \CK_n  = n$. 
By the max-min principle 
(see e.g. \cite[p. 212, Theorem 5]{BS}), 
\[
\l^{(n)}_\a\ge \min (B_{\a} u, u), 
\]
where the minimum is taken over all functions $u\in \CK_n$ 
such that $\|u\|=1$.
Thus by definition \eqref{ralpha:eq} 
\begin{equation*}
\l^{(n)}_\a \ge 1 - \a^{\s}\max_{u\in\CK_n, \|u\|=1} T[u] 
-  n \max_{1\le j,k\le n} |R_\a[\phi^{(j)}, \phi^{(k)}]|. 
\end{equation*}
Since $\{\phi^{(j)}\}$ are eigenfunctions of $T$,
\[
\max_{u\in\CK_n, \|u\|=1} T[u]  = \mu^{(n)},
\]
and the required result now follows from 
\eqref{remainder:eq}. 
\end{proof}

Now we can establish the uniform localization of 
the eigenfunctions $\psi_\a^{(n)}$, $n=1 ,2, \dots$.
Denote 
\begin{equation*}
\t_\a^{(n)}(\bx) = W_\a(\bx) \psi_\a^{(n)}(\bx).
\end{equation*}

\begin{lem} \label{trunc:lem}
For all $n = 1, 2, \dots,$ 
the forms $K_\a[\t_\a^{(n)}]$ 
and $S_\a[\psi_\a^{(n)}]$ are bounded uniformly in $\a$:
\begin{align}\label{KSa:eq}
\underset{\a\to 0}
\limsup \bigl(K_\a[\t_\a^{(n)}] + S_\a[\psi_\a^{(n)}]\bigr)\le \mu^{(n)},
\end{align}
and 
\begin{align}\label{with:eq}
\|\t_\a^{(n)} - \psi_\a^{(n)}\|\to 0,\ \a\to 0. 
\end{align}
Moreover, for all $R>0$ we have
\begin{equation}\label{truncxi:eq}
\underset{\a\to 0}\liminf\ 
\|\widehat{\psi_\a^{(n)}}\chi_R\|^2\ge 1 - C\mu^{(n)} R^{-\g}, 
\end{equation}
and
\begin{equation}\label{truncx:eq}
\underset{\a\to 0}\liminf\ \| \psi_\a^{(n)}\chi_R\|^2\ge 1 - C \mu^{(n)}R^{-\b}.
\end{equation}
with some constant $C$, independent of $n$ and $R$. 
\end{lem}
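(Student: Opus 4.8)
The plan is to derive all four assertions from a single exact identity satisfied by the eigenfunctions, using only the upper bound \eqref{limsup:eq} already proved.

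First I would record the identity. Setting $u=v=\psi_\a^{(n)}$ in \eqref{BKS:eq} and using $B_\a\psi_\a^{(n)}=\l_\a^{(n)}\psi_\a^{(n)}$, $\|\psi_\a^{(n)}\|=1$, together with $\t_\a^{(n)}=W_\a\psi_\a^{(n)}$, gives the exact equality
\begin{equation*}
K_\a[\t_\a^{(n)}] + S_\a[\psi_\a^{(n)}] = \a^{-\s}\bigl(1-\l_\a^{(n)}\bigr).
\end{equation*}
Hence \eqref{KSa:eq} is precisely \eqref{limsup:eq}. Since $K_\a[\,\cdot\,]\ge 0$ and $S_\a[\,\cdot\,]\ge 0$, the same equality yields the individual uniform bounds $K_\a[\t_\a^{(n)}]\le \mu^{(n)}+o(1)$ and $S_\a[\psi_\a^{(n)}]\le \mu^{(n)}+o(1)$ as $\a\to 0$; these are the only inputs needed for the remaining three claims.

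For \eqref{with:eq} I would write $\t_\a^{(n)}-\psi_\a^{(n)}=(W_\a-1)\psi_\a^{(n)}$. The two-sided bound $-1+c\le V\le 1$ from Condition \ref{aV:cond}\eqref{global:item} gives $0\le 1-W_\a\le 2$ and $1+W_\a\ge c$, so $(1-W_\a)^2\le C(1-W_\a)(1+W_\a)=C(1-W_\a^2)$. Therefore
\begin{equation*}
\|\t_\a^{(n)}-\psi_\a^{(n)}\|^2=\int (1-W_\a)^2|\psi_\a^{(n)}|^2\,d\bx \le C\a^{\s}S_\a[\psi_\a^{(n)}]\le C\a^{\s}\bigl(\mu^{(n)}+o(1)\bigr)\to 0.
\end{equation*}

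The localization bounds \eqref{truncxi:eq} and \eqref{truncx:eq} are the heart of the matter; both follow one template, which I describe for \eqref{truncxi:eq}. By Parseval, $\|\widehat{\psi_\a^{(n)}}\chi_R\|^2=1-\int_{|\bxi|>R}|\widehat{\psi_\a^{(n)}}|^2\,d\bxi$, and by \eqref{with:eq} the tail of $\widehat{\psi_\a^{(n)}}$ may be replaced by that of $\widehat{\t_\a^{(n)}}$ up to an $o(1)$ error; so it suffices to bound $\int_{|\bxi|>R}|\widehat{\t_\a^{(n)}}|^2$ using $K_\a[\t_\a^{(n)}]\le\mu^{(n)}+o(1)$. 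The crux is a lower bound for the weight $\a^{-\s}(1-b_\a)$. I would split $\R^d$ into a near zone $|\bxi|\le\rho\,\a^{-\b/(\g+\b)}$ and a far zone $|\bxi|>\rho\,\a^{-\b/(\g+\b)}$ for a small fixed $\rho$. In the near zone the argument $\a^{\b/(\g+\b)}\bxi$ lies in the ball of radius $\rho$, so \eqref{a:eq} and $\Psi_\g(\bxi)\ge c_0|\bxi|^\g$ (homogeneity and positivity of $\Psi_\g$) give, for $\rho$ and $\a$ small, $\a^{-\s}(1-b_\a)\ge \tfrac{c_0}{2}|\bxi|^\g\ge \tfrac{c_0}{2}R^\g$; the near-zone part of the tail is thus at most $\tfrac{2}{c_0R^\g}K_\a[\t_\a^{(n)}]$. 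In the far zone $\a^{\b/(\g+\b)}\bxi$ stays away from the origin, so $1-b_\a\ge\delta_\rho>0$ by the decay of $a$ at infinity and attainment of its maximum only at the origin (Condition \ref{aV:cond}\eqref{decay:item},\eqref{global:item}); that part of the tail is at most $\delta_\rho^{-1}\a^{\s}K_\a[\t_\a^{(n)}]\to 0$. Taking $\limsup$ yields $\int_{|\bxi|>R}|\widehat{\t_\a^{(n)}}|^2\le C\mu^{(n)}R^{-\g}$, which is \eqref{truncxi:eq}. The bound \eqref{truncx:eq} is identical with $(b_\a,\Psi_\g,\g)$ replaced by $(W_\a^2,2\Phi_\b,\b)$, using \eqref{Wlim:eq} and $\Phi_\b(\bx)\ge c|\bx|^\b$; here one works directly with $\psi_\a^{(n)}$ and no passage to $\t_\a^{(n)}$ is needed.

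The main obstacle is exactly the far-zone analysis: the convergences \eqref{blim:eq} and \eqref{Wlim:eq} are only locally uniform, so the clean lower bound $\a^{-\s}(1-b_\a)\gtrsim|\bxi|^\g$ cannot hold globally and the high-frequency (resp. large-$|\bx|$) contribution must be treated separately. This is where the global hypotheses enter: the decay of $a,V$ at infinity and the uniqueness of their maxima furnish a uniform gap $1-b_\a\ge\delta_\rho$ on the far zone. For \eqref{truncx:eq} it is worth noting that the \emph{lower} bound $-V_0+c\le V$ in Condition \ref{aV:cond}\eqref{global:item} is essential: it prevents $W_\a^2\to 1$ through $W_\a\to -1$, thereby securing $1-W_\a^2\ge\delta_\rho'>0$ in the far zone, after which the $\a^{\s}$ prefactor makes that contribution negligible.
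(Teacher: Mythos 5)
Your proposal is correct and takes essentially the same approach as the paper: the exact identity $\a^{-\s}\bigl(1-\l_\a^{(n)}\bigr)=K_\a[\t_\a^{(n)}]+S_\a[\psi_\a^{(n)}]$ from \eqref{BKS:eq} together with \eqref{limsup:eq} yields \eqref{KSa:eq}, the estimate $(1-W_\a)^2\le C(1-W_\a^2)$ (using $1+W_\a\ge c$) yields \eqref{with:eq}, and Chebyshev-type lower bounds on the weights $\a^{-\s}(1-b_\a)$ and $\a^{-\s}(1-W_\a^2)$ yield \eqref{truncxi:eq} and \eqref{truncx:eq}. Your explicit near/far-zone splitting simply fills in the step that the paper compresses into the one-line claims $b_\a(\bxi)\le 1-CR^\g\a^\s$ for $|\bxi|>R$ and $W_\a(\bx)^2\le 1-CR^\b\a^\s$ for $|\bx|>R$, so no genuinely different idea is involved.
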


\begin{proof}
We drop the superscript ``$n$" for brevity. 
According to \eqref{BKS:eq},
\begin{align*}
\a^{-\s}(1-\l_\a) = K_\a[\t_\a] + S_{\a}[\psi_\a].
\end{align*} 
Now \eqref{KSa:eq} follows from \eqref{limsup:eq}.
Now write
\begin{align*}
\|\t_\a-\psi_\a\|^2 = \int \bigl(1-W_\a(\bx)\bigr)^2 |\psi_\a(\bx)|^2 d\bx.
\end{align*}
The straightforward estimate 
\begin{align*}
\frac{1}{2}(1-W_\a)^2\le 1-W_\a = \frac{1-W_\a^2}{1+W_\a}\le C(1-W_\a^2),
\end{align*}
by the definition \eqref{Sform:eq}, 
implies that 
\begin{align*}
\|\t_\a-\psi_\a\|^2\le C\a^\s S_\a[\psi_\a],
\end{align*}
which leads to the convergence $\|\t_\a-\psi_\a\|\to 0$, $\a\to 0$,
in view of \eqref{KSa:eq}. 

Proof of \eqref{truncxi:eq}. 
By Condition \ref{aV:cond}(\ref{global:item}),  
the point $\bxi = \mathbf 0$ is the global maximum  
of $b_\a(\bxi)$, so in view of 
\eqref{a:eq}, for all $|\bxi|>R, R>0$ and all sufficiently small $\a$ 
we have 
\begin{equation*}
b_\a(\bxi) = a\bigl( \a^{\frac{\b}{\g+\b}}\bxi\bigr)\le 1 - CR^{\g} \a^{\s},  
\end{equation*}  
 with some constant $C$. Thus $\a^{-\s}(1-b_\a(\bxi))\ge CR^\g$, and hence
 \begin{align*}
 K_\a[\t_\a]\ge C R^\g  
 \underset{|\bxi|>R}\int |\hat\t_\a(\bxi)|^2 d\bxi,
 \end{align*}
so that, by \eqref{KSa:eq}, 
$\|\hat\t_\a (1- \chi_R)\|^2\le C\mu R^{-\g}$. 
Together with \eqref{with:eq} this leads to \eqref{truncxi:eq}. 

Proof of \eqref{truncx:eq} is similar. 
By Condition \ref{aV:cond}(\ref{global:item}) and by \eqref{V:eq}, 
for all $|\bx| >R$, $R>0$, we have $|W_\a(\bx)|^2\le 1 - CR^\b \a^\s$, and hence 
\begin{align*}
  S_\a[\psi_\a]\ge CR^\b  
 \underset{|\bx|>R}\int |\psi_\a(\bx)|^2 d\bx,
 \end{align*}
so that by 
\eqref{KSa:eq} again, 
$\|\psi_\a (1- \chi_R)\|^2\le C\mu R^{-\b}$. 
This leads to \eqref{truncx:eq}. 
\end{proof}

With the help of Lemma \ref{trunc:lem}, 
in the proof of Theorem \ref{main:thm} we 
show that any weakly convergent sequence of 
the eigenfunctions $\psi^{(n)}_\a$ in fact converges in norm. 
For this we rely on the following result: 
  
\begin{prop}\label{weak:prop}(See \cite[Lemma 12]{MitSob})
Let $f_j\in\plainL2(\R^d)$ be a sequence such that $\|f_j\|\le C$ uniformly in
$j = 1,2, \dots$, and  $f_j(\bx) = 0$ for all $|\bx|\ge \rho >0$
and all $j = 1, 2, \dots$. Suppose that $f_j$
converges weakly to $f\in\plainL2(\R^d)$ as $j\to\infty$, 
and that for some constant $A>0$,
and all $R\ge R_0>0$,
\begin{equation}\label{local:eq}
\underset{j\to\infty}
\liminf\| \hat f_j\chi_R\|
\ge A - C R^{-\vark}, \ \vark >0,
\end{equation}
with some constant $C$ independent of $j, R$. Then $\|f\|\ge A$.
\end{prop}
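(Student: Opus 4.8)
The plan is to move to the Fourier side and turn the fixed compact spatial support of the $f_j$ into \emph{strong} convergence of $\hat f_j$ on every bounded frequency ball; the hypothesis \eqref{local:eq} then prevents mass from escaping to high frequencies. Note at the outset that weak lower semicontinuity of the norm only yields $\|f\|\le\liminf_j\|f_j\|$, which is the wrong direction, so the support hypothesis must enter in an essential way.

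First I would record that, since $\CF$ is unitary and hence weak-to-weak continuous, $\hat f_j\to\hat f$ weakly in $\plainL2(\R^d)$, and that by Plancherel it suffices to prove $\|\hat f\|\ge A$. The crucial observation is a uniform Sobolev bound on the transforms: because $f_j(\bx)=0$ for $|\bx|\ge\rho$, one has $\partial_{\xi_k}\hat f_j = \CF(-i x_k f_j)$ with $\|x_k f_j\|\le\rho\|f_j\|\le C\rho$, so $\hat f_j$ is bounded in $\plainH1(\R^d)$ uniformly in $j$.

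Fix $R>0$. On the ball $\{|\bxi|<R\}$ the restrictions of $\hat f_j$ are bounded in $\plainH1$, hence precompact in $\plainL2(\{|\bxi|<R\})$ by the Rellich--Kondrachov theorem. Since the weak limit of $\hat f_j$ is already known to be $\hat f$, the usual subsequence argument (every subsequence admits a further subsequence converging strongly to the same limit, which pins down the strong limit) forces the full sequence to converge: $\hat f_j\to\hat f$ strongly in $\plainL2(\{|\bxi|<R\})$, and in particular $\|\hat f_j\chi_R\|\to\|\hat f\chi_R\|$. Combined with \eqref{local:eq}, this gives, for every $R\ge R_0$,
\[
\|\hat f\,\chi_R\| = \lim_{j\to\infty}\|\hat f_j\,\chi_R\|\ge \liminf_{j\to\infty}\|\hat f_j\,\chi_R\|\ge A - C R^{-\vark}.
\]

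Letting $R\to\infty$ and using $\|\hat f\chi_R\|\nearrow\|\hat f\|$ together with $R^{-\vark}\to0$ then yields $\|\hat f\|\ge A$, i.e. $\|f\|\ge A$. The main obstacle is the strong-convergence step of the third paragraph: everything rests on converting the compact support of $f_j$ into the uniform $\plainH1$ control of $\hat f_j$ that makes Rellich--Kondrachov applicable, after which the passage to the limit in $R$ is routine.
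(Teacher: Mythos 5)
Your proof is correct, but note that the paper itself contains no proof of this proposition to compare against: it is quoted verbatim from \cite[Lemma 12]{MitSob}, so your argument is necessarily a reconstruction rather than a match. On its merits, every step checks out: the unitarity of $\CF$ gives $\hat f_j\rightharpoonup\hat f$; the fixed support radius $\rho$ gives $\|x_k f_j\|\le\rho\|f_j\|\le C\rho$ and hence a uniform $\plainH1(\R^d)$ bound on $\hat f_j$; Rellich--Kondrachov on the ball $\{|\bxi|<R\}$ plus the standard subsequence principle (every subsequence has a further subsequence converging strongly, necessarily to the already-identified weak limit $\hat f$) upgrades this to strong convergence of $\hat f_j\chi_R$, so $\|\hat f_j\chi_R\|\to\|\hat f\chi_R\|\ge A-CR^{-\vark}$, and letting $R\to\infty$ finishes via Plancherel. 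You correctly flag that weak lower semicontinuity alone points the wrong way and that the compact support is the essential input. It is worth knowing that the same local strong convergence can be obtained more elementarily, without Sobolev embeddings: since $f_j$ is supported in $\{|\bx|\le\rho\}$, testing the weak convergence against $e^{-i\bxi\cdot\bx}$ times the indicator of that ball gives pointwise convergence $\hat f_j(\bxi)\to\hat f(\bxi)$ for every $\bxi$, together with the uniform bound $|\hat f_j(\bxi)|\le (2\pi)^{-d/2}\volume(B_\rho)^{1/2}C$ (this is the Paley--Wiener picture: the $\hat f_j$ form a normal family of entire functions); dominated convergence on the finite-measure set $\{|\bxi|<R\}$ then yields $\|\hat f_j\chi_R\|\to\|\hat f\chi_R\|$ directly. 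That route trades your compactness machinery for a one-line pointwise estimate; your version, in exchange, generalizes immediately to situations where one only controls weighted norms of $x f_j$ rather than an exact support condition.
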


\section{Proof of Theorem \ref{main:thm}}

As before, we assume that $a$ and $V$ 
satisfy Condition \ref{aV:cond}, and that 
$A_0 = V_0 = 1$. 

The next lemma is the last step towards the proof of Theorem \ref{main:thm}.

\begin{lem} \label{norm:lem} 
Suppose that for some sequence $\a_k>0$, convergent to zero as $k\to \infty$, 
the sequence of eigenfunctions 
$\psi^{(n)}_{\a_k}$ converges weakly 
to $\psi^{(n)}$. Then 
\begin{enumerate}
\item 
The sequence $\psi^{(n)}_{\a_k}$ converges to 
$\psi^{(n)}$ in norm as $k\to\infty$,
\item
The norm limit  $\psi^{(n)}$
belongs to $D[T]$, and  
\begin{equation}\label{quadrat1:eq}
\lim_{k\to\infty}{\a_k}^{-\s}
\bigl( (\psi_{\a_k}^{(n)}, g)
- B_{\a_k}[\psi_{\a_k}^{(n)}, g]\bigr)
=  T[\psi^{(n)}, g],
\end{equation}
for any $g\in D[T]$.
\end{enumerate}
\end{lem}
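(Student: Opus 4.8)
The plan is to prove the two assertions in turn, using the strong convergence from part~(1) throughout part~(2). Write $\psi_\a = \psi_{\a_k}^{(n)}$, $\t_\a = W_\a\psi_\a$, $\mu = \mu^{(n)}$, and $\psi := \psi^{(n)}$, dropping the superscript. Since $\|\psi_\a\| = 1$ and $\psi_\a \to \psi$ weakly, weak lower semicontinuity gives $\|\psi\|\le 1$; hence for part~(1) it suffices to prove $\|\psi\|\ge 1$, because in a Hilbert space weak convergence together with convergence of norms upgrades to norm convergence.

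To show $\|\psi\|\ge 1$ I would transfer the localization of Lemma~\ref{trunc:lem} to a compactly supported sequence and invoke Proposition~\ref{weak:prop}. Fix $\rho>0$ and set $f_k = \psi_{\a_k}\chi_\rho$, which is supported in $\{|\bx|\le\rho\}$, satisfies $\|f_k\|\le 1$, and converges weakly to $\psi\chi_\rho$. From \eqref{truncx:eq} one gets $\limsup_k\|f_k-\psi_{\a_k}\|^2 = \limsup_k(1-\|\psi_{\a_k}\chi_\rho\|^2)\le C\mu\rho^{-\b}$, so by Parseval $\|\widehat{f_k}\chi_R\|\ge\|\widehat{\psi_{\a_k}}\chi_R\| - \|f_k-\psi_{\a_k}\|$; combining this with \eqref{truncxi:eq} and $\sqrt{1-x}\ge 1-x$ yields $\liminf_k\|\widehat{f_k}\chi_R\|\ge A_\rho - C\mu R^{-\g}$ with $A_\rho = 1 - (C\mu\rho^{-\b})^{1/2}$. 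Proposition~\ref{weak:prop} (with $\vark=\g$) then gives $\|\psi\chi_\rho\|\ge A_\rho$, whence $\|\psi\|\ge 1-(C\mu\rho^{-\b})^{1/2}$, and letting $\rho\to\infty$ produces $\|\psi\|\ge 1$. This passage from an a~priori weak limit to a genuine norm limit, ruling out escape of mass to infinity by exploiting spatial and frequency localization \emph{simultaneously}, is the \textbf{main obstacle}.

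For part~(2) I would first check $\psi\in D[T]$. By part~(1) we have $\psi_{\a_k}\to\psi$, hence $\t_{\a_k}\to\psi$ (using \eqref{with:eq}) and $\widehat{\t_{\a_k}}\to\hat\psi$, all strongly in $\plainL2$. Restricting the nonnegative forms to balls, for each fixed $R$ the uniform convergence $\a^{-\s}(1-b_\a)\to\Psi_\g$ and $\a^{-\s}(1-W_\a^2)\to 2\Phi_\b$ on compacts (from \eqref{blim:eq}, \eqref{Wlim:eq}) together with strong $\plainL2$ convergence gives $\int_{|\bxi|\le R}\Psi_\g|\hat\psi|^2 = \lim_k\int_{|\bxi|\le R}\a_k^{-\s}(1-b_{\a_k})|\widehat{\t_{\a_k}}|^2\le\liminf_k K_{\a_k}[\t_{\a_k}]$, and likewise $\int_{|\bx|\le R}2\Phi_\b|\psi|^2\le\liminf_k S_{\a_k}[\psi_{\a_k}]$. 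Adding these and letting $R\to\infty$ by monotone convergence yields $T[\psi]\le\liminf_k(K_{\a_k}[\t_{\a_k}]+S_{\a_k}[\psi_{\a_k}])\le\mu$ by \eqref{KSa:eq}; in particular $T[\psi]<\infty$, i.e. $\psi\in D[T]$.

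Finally, for the form identity \eqref{quadrat1:eq} I would rewrite the left-hand side via \eqref{BKS:eq} as $K_{\a_k}[\t_{\a_k},W_{\a_k}g] + S_{\a_k}[\psi_{\a_k},g]$ and treat the two terms separately. For the $K$-term, split $W_\a g = g + (W_\a-1)g$: the cross term $K_\a[\t_\a,(W_\a-1)g]$ is eliminated by the Cauchy--Schwarz inequality for the nonnegative form $K_\a$, the uniform bound on $K_\a[\t_\a]$ from \eqref{KSa:eq}, and $K_\a[(W_\a-1)g]\to 0$, which is precisely \eqref{Kerr:eq} with $u=g$; the remaining $K_\a[\t_\a,g] = \int\a^{-\s}(1-b_\a)\widehat{\t_\a}\overline{\hat g}$ converges to $\int\Psi_\g\hat\psi\overline{\hat g}$ by a ball/tail split, using uniform convergence of the multiplier and strong $\plainL2$ convergence of $\widehat{\t_\a}$ on $|\bxi|\le R$, and on $|\bxi|>R$ the bound $\a^{-\s}(1-b_\a)\le C|\bxi|^\g$, the uniform estimate $\int_{|\bxi|>R}|\bxi|^\g|\widehat{\t_\a}|^2\le C K_\a[\t_\a]\le C$ (valid since $\a^{-\s}(1-b_\a)\ge c|\bxi|^\g$ as in the derivation of \eqref{truncxi:eq}), and the vanishing $\plainH{\g/2}$-tail of the fixed function $g$ as $R\to\infty$. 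The $S$-term $S_\a[\psi_\a,g] = \int\a^{-\s}(1-W_\a^2)\psi_\a\overline g$ is handled by the identical ball/tail split, now with $\int_{|\bx|>R}|\bx|^\b|\psi_\a|^2\le C S_\a[\psi_\a]\le C$ controlling the tail (as in the derivation of \eqref{truncx:eq}) and strong $\plainL2$ convergence controlling the ball, giving the limit $2\int\Phi_\b\psi\overline g$. Summing the two limits reproduces $T[\psi,g]$, which establishes \eqref{quadrat1:eq}.
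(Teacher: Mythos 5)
Your part (1) (truncation of $\psi_\a$ to a ball, the estimate via \eqref{truncxi:eq}--\eqref{truncx:eq}, Proposition \ref{weak:prop}, and letting $\rho\to\infty$), your proof that $\psi^{(n)}\in D[T]$ (the paper uses Fatou along an a.e.\ convergent subsequence where you restrict to balls and use monotone convergence --- a cosmetic difference), and the overall scheme for \eqref{quadrat1:eq} (decomposition via \eqref{BKS:eq}, removal of the cross term by Cauchy--Schwarz for the nonnegative form $K_\a$ together with \eqref{Kerr:eq} at $u=g$, then a ball/tail splitting with uniform convergence of the multipliers on compacts) all coincide with the paper's argument. There is, however, one genuine flaw in how you justify the tail estimates.

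You assert that $\a^{-\s}\bigl(1-b_\a(\bxi)\bigr)\ge c|\bxi|^\g$ on $\{|\bxi|>R\}$ ``as in the derivation of \eqref{truncxi:eq}''. That derivation gives only the \emph{constant} lower bound $\a^{-\s}(1-b_\a)\ge CR^\g$ there. The pointwise bound you need is equivalent to $1-a(\boldeta)\ge c|\boldeta|^\g$ for all $|\boldeta|\ge \a^{\frac{\b}{\g+\b}}R$, which is false for large $|\boldeta|$: the left-hand side is bounded by $1+\|a\|_{\plainL\infty}$ while the right-hand side is unbounded. Consequently your claimed uniform estimate $\int_{|\bxi|>R}|\bxi|^\g|\hat\t_\a|^2\,d\bxi\le CK_\a[\t_\a]$ is unjustified --- indeed its left-hand side may well be infinite, since nothing shows $\t_\a\in\plainH{\g/2}$ ($\psi_\a$ is merely an $\plainL2$ eigenfunction of $B_\a$, and $b_\a$ is only bounded, so no regularity of $\hat\t_\a$ is available). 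The same objection applies verbatim to your bound $\int_{|\bx|>R}|\bx|^\b|\psi_\a|^2\,d\bx\le CS_\a[\psi_\a]$ used for the $S$-term.

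The repair is immediate and is exactly what the paper does: put the $|\bxi|^\g$ (resp.\ $|\bx|^\b$) weight on the \emph{fixed} function $g\in D[T]$ rather than on the $\a$-dependent functions. Setting $\hat g_R = \hat g\chi_R$, Cauchy--Schwarz for the nonnegative form $K_\a$ gives
\begin{equation*}
\bigl|K_\a[\t_\a, g-g_R]\bigr|^2\le K_\a[\t_\a]\,K_\a[g-g_R]
\le C\int_{|\bxi|>R}|\bxi|^\g|\hat g(\bxi)|^2\,d\bxi,
\end{equation*}
by \eqref{KSa:eq} and \eqref{KS:eq}, and the right-hand side vanishes as $R\to\infty$ uniformly in $\a$ because $g\in D[T]$; this reduces \eqref{Klim:eq} to compactly supported $\hat g$, where your uniform-convergence-plus-strong-$\plainL2$ argument applies as written. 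The analogous form-weighted Cauchy--Schwarz with $S_\a[\psi_\a]\le C$ and \eqref{KS:eq} handles the tail of $S_\a[\psi_\a,g]$. With this substitution your proof is complete and is, in substance, the paper's.
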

 
\begin{proof} As before, 
we omit the superscript ``$n$". 
Also for brevity we write $\a$ instead of $\a_k$. 

Proof of (1). 
Due to the formula
\begin{equation*}
\|\psi-\psi_\a\|^2 = 1+\|\psi\|^2 - 2\re (\psi_\a, \psi)\to 1 - \|\psi\|^2,\ 
\a\to 0,
\end{equation*}
it suffices to show that $\|\psi\| = 1$. 

For a number $\rho >0$ 
denote $w_{\a, \rho} = \psi_\a \chi_\rho$, 
$y_{\a, \rho} = \psi_\a (1-\chi_\rho)$.
Thus, by \eqref{truncxi:eq} and \eqref{truncx:eq},  
\begin{align*}
\|\widehat {w_{\a, \rho}}\chi_R\|\ge \|\widehat{\psi_\a}\chi_R\| - 
\|y_{\a, \rho}\| \ge 1 - C\mu R^{-\g} - C(\mu \rho^{-\b})^{\frac{1}{2}}.
\end{align*}
Since $\psi_\a\to \psi$ weakly, then for any $\rho>0$ 
the family $w_{\a, \rho}$  
converges to $\psi\chi_\rho$ weakly. 
Using Proposition \ref{weak:prop} for the sequence 
$w_{\a, \rho}$ we conclude that 
\begin{equation*}
\|\psi \chi_\rho\|\ge 1 - C(\mu\rho^{-\b})^{\frac{1}{2}}.
\end{equation*}
Since $\rho$ is arbitrary, this means that 
$\|\psi\|=1$, which implies the norm convergence 
$\psi_\a\to\psi$, $\a\to 0$, as claimed. 

Proof of (2). 
By Part (1) above, and by \eqref{with:eq}, 
we have 
\begin{equation*}
\|\hat\t_\a - \hat\psi\| \le 
\|\t_\a - \psi_\a\|+ \|\psi_\a - \psi\|\to 0,\ \a\to 0.
\end{equation*} 
Thus for a subsequence $\hat \t_\a$, there is a pointwise convergence 
$\hat\t_\a\to \hat\psi$, $\a\to 0$.  
By \eqref{blim:eq}, the integrand in $K_\a[\t_\a]$ converges pointwise 
to $\Psi_\g(\bxi) |\hat \psi(\bxi)|^2$. 
By \eqref{KSa:eq}, $K_\a[\t_\a]$ 
is uniformly bounded, so by Fatou's Lemma, 
$|\bxi|^{\g/2} \hat\psi\in\plainL2(\R^d)$. 

By \eqref{Wlim:eq}, 
the integrand in $S_\a[\psi_\a]$ converges pointwise to 
$2\Phi_\b(\bx)|\psi(\bx)|^2$. 
By \eqref{KSa:eq}, $S_\a[\psi_\a]$ is uniformly 
bounded, so by Fatou's Lemma again, 
$|\bx|^{\b/2}\psi\in \plainL2(\R^d)$. Together with the previously obtained property 
$|\bxi|^{\g/2} \hat\psi\in\plainL2(\R^d)$, this means that $\psi\in D[T]$.

Proof of \eqref{quadrat1:eq} is similar to that of 
\eqref{remainder:eq}, but is somewhat more complicated since 
it involves functions $\psi_\a$ depending on the 
parameter $\a$. 
By \eqref{BKS:eq},  
\begin{align*}
{\a}^{-\s}
\bigl( (\psi_{\a}, g)
- B_{\a}[\psi_{\a}, g]\bigr) = 
K_\a[\t_\a, y_\a] + S_\a[\psi_\a, g], 
\end{align*}
where $y_\a = W_\a g$. We prove that 
\begin{align}\label{Klim:eq}
\underset{\a\to 0}\lim \ K_\a[\t_\a, y_\a]
= \int \Psi_\g(\bxi) \hat\psi(\bxi) \overline{\hat g(\bxi)} d\bxi,
\end{align}
and 
\begin{align}\label{Slim:eq}
\underset{\a\to 0}\lim \ 
S_\a[\psi_\a, g]
= 2\int \Phi_\b(\bx)\psi(\bx) \overline{g(\bx)} d\bx. 
\end{align}
Estimate:
\begin{align*}
\bigl|K_\a[\t_\a, y_\a] - K_\a[ \t_\a, g]\bigr|^2
\le K_\a[\t_\a]  K_\a[y_\a - g].
\end{align*}
The first factor is bounded uniformly in $\a$
by \eqref{KSa:eq}, and the second one tends to zero 
due to \eqref{Kerr:eq}. 
This shows that 
\begin{equation}\label{K1:eq}
K_\a[\t_\a, y_\a] - K_\a[\t_\a, g]\to 0,\ \a\to 0.
\end{equation}
Because of this property, and 
because of \eqref{KS:eq}, in the proof of \eqref{Klim:eq} 
we may assume that $\hat g$ is compactly supported, i.e. 
$\hat g(\bxi) = 0$ for all $|\bxi| > R$ with some $R>0$.  
The convergence  
\eqref{blim:eq} is uniform in $\bxi: |\bxi|\le R$ for any $R$.
At the same time, as shown earlier, 
$\|\hat \t_\a-\hat\psi\|\to0, \a\to0$, so that 
\begin{equation*}
K_\a[\t_\a, g]\to \int \Psi_\g(\bxi) 
\hat\psi(\bxi)\overline {\hat g(\bxi)} d\bxi,\ \a\to 0.
\end{equation*} 
Together with \eqref{K1:eq} this gives \eqref{Klim:eq}. 

Proof of \eqref{Slim:eq} is simpler. 
Because of \eqref{KS:eq}, 
we may assume that $g$ 
is compactly supported. 
The convergence \eqref{Wlim:eq}
is uniform in $\bx: |\bx|\le R$ for any $R>0$. 
Using the property $\|\psi_\a - \psi\|\to0, \a\to 0,$ 
established in Part 1, we obtain 
\begin{equation*}
S_\a[\psi_\a, g] \to \int 2\Phi_\b(\bx) 
\psi(\bx) \overline {g(\bx)} d\bx,\ \a\to 0,
\end{equation*}
so that \eqref{Slim:eq} holds. 

Put together \eqref{Klim:eq} and \eqref{Slim:eq} to conclude that 
\begin{equation*}
 \a^{-\s}\bigl((\psi_{\a}, g)
- B_\a[\psi_{\a}, g]\bigr) \to T[\psi, g],\ \a\to 0,  
\end{equation*}
as required. 
\end{proof}

\begin{proof}[Proof of Theorem \ref{main:thm}]
 The proof essentially follows the plan of \cite{Widom_61}.
It suffices to show that for
any sequence $\a_k\to 0, k\to \infty, $ one can find a subsequence
$\a_{k_l}\to 0$, $l\to \infty$, such that
\begin{equation}\label{two:eq}
\lim_{l\to \infty} \a_{k_l}^{-\s}(1-\l^{(n)}_{\a_{k_l}}) = \mu^{(n)}. 
\end{equation}
Since $\|\psi_{\a_k}^{(n)}\|=1$, one can extract a 
subsequence $\a_{k_l}\to 0$ such that 
 $\psi_{\a_{k_l}}^{(n)}$ converges weekly as $l\to \infty$. 
By Lemma \ref{norm:lem} 
$\psi^{(n)}_{\a_{k_l}}$ converges in norm as $l\to \infty$.
Denote by $\psi^{(n)}$ its limit, so $\|\psi^{(n)}\| = 1$. 
Further 
for simplicity we write $\psi^{(n)}_\a$ and $\l^{(n)}_\a$ 
instead of $\psi^{(n)}_{\a_{k_l}}$ and $\l^{(n)}_{\a_{k_l}}$.
As $\psi^{(n)}_\a$, $n = 1, 2, \dots$, are pair-wise orthogonal, 
so are their limits $\psi^{(n)}$, $n = 1, 2, \dots$.

Fix a number $n = 1, 2, \dots$. 
For an arbitrary function $f\in D[T]$ write
\begin{equation*}
\a^{-\s}(1-\l^{(n)}_\a)(\psi^{(n)}_\a, f)
= \a^{-\s}\bigl((\psi^{(n)}_\a, f) - B_\a[\psi^{(n)}_\a, f]\bigr).
\end{equation*}
Suppose that $f$ is such that $(\psi^{(n)}, f)\not = 0$. Then, in view of 
\eqref{quadrat1:eq},  
 \begin{equation*}
\lim_{\a\to 0} \a^{-\s}(1-\l^{(n)}_\a) 
= \frac{T[\psi^{(n)}, f]}
{(\psi^{(n)}, f)}.
 \end{equation*}
Let $f = \phi^{(j)}$, where $\phi^{(j)}$ is chosen in such a way that 
$(\phi^{(j)}, \psi^{(n)})\not = 0$. 
This is possible due to the completeness of 
the family $\phi^{(k)}, k = 1, 2, \dots$.   Thus 
 \begin{equation*}
\lim_{\a\to 0} \a^{-\s}(1-\l^{(n)}_\a) = \mu^{(j)}.
 \end{equation*}
 By the
uniqueness of the above limit,
$(\psi^{(j)}, \phi^{(s)}) = 0$ 
for all $s$'s such that $\mu^{(s)}\not = \mu^{(j)}$.
Thus, by completeness of the system $\{\phi^{(k)}\}$,
the function $\psi^{(n)}$  is an eigenfunction 
of $T$ with the eigenvalue $\mu^{(j)}$, i.e. 
$T[\psi^{(n)}] = \mu^{(j)}$. 
As $\psi^{(k)}_\a$, $k = 0, 1, \dots, n$, are pair-wise orthogonal, 
so are their limits $\psi^{(k)}$, $k = 0, 1, \dots, n$. 

Further proof is by induction. 
Let $n=1$, so that by \eqref{limsup:eq}, $\mu^{(j)}\le \mu^{(1)}$, 
and hence $j=1$, and $\psi^{(1)}$ is the eigenfunction of $T$ 
with eigenvalue $\mu^{(1)}$. 
%
Suppose that for some $n$, the 
collection $\psi^{(1)}, \psi^{(2)},\ \dots, \psi^{(n-1)}$ 
are eigenfunctions of $T$ with eigenvalues 
$\mu^{(1)}, \mu^{(2)}, \dots, \mu^{(n-1)}$. 
Since $\psi^{(n)}$ is orthogonal to each 
$\psi^{(k)}$, $k = 1, 2, \dots, n-1$, 
by the standard min-max (or, more precisely, max-min) principle for 
operators semi-bounded from below,  
we have $T[\psi^{(n)}]\ge \mu^{(n)}$, which means that 
$\mu^{(j)}\ge \mu^{(n)}$.  
On the other hand, by \eqref{limsup:eq},
\begin{equation*}
\lim_{\a\to 0} \a^{-\s}(1-\l^{(n)}_\a)\le \mu^{(n)},
\end{equation*}
and hence $\mu^{(j)}\le \mu^{(n)}$. 
Therefore $\mu^{(j)} = \mu^{(n)}$, and $\psi^{(n)}$ is the eigenfunction 
of $T$ with eigenvalue $\mu^{(n)}$. By induction, 
the formula \eqref{two:eq} is proved for all $n$, 
which entails \eqref{main_symm:eq}, 
and hence proves Theorem \ref{main:thm}. 
 \end{proof}

\textbf{Acknowledgements.} 
The author is grateful to R. Romanov for bringing the 
problem studied in this paper to his attention, 
for pointing out references \cite{BW} and \cite{KNR}, and for 
many stimulating discussions.  

The author was supported by EPSRC grant EP/J016829/1.




\end{document}